\newtheorem{thm}{Theorem}[section]
\newtheorem{cor}[thm]{Corollary}
\newtheorem{con}[thm]{Conjecture}
\newtheorem{prop}[thm]{Proposition}
\theoremstyle{definition}
\newtheorem{defn}[thm]{Definition}
\theoremstyle{definition}
\newtheorem{rem}[thm]{Remark}
\theoremstyle{definition}
\newtheorem{exm}[thm]{Example}
\theoremstyle{definition}
  \newtheorem{thevarthm}[thm]{\varthmname}
\newenvironment{varthm*}[1]{\trivlist\item[]{\bf #1.}\it}{\endtrivlist}
 \title[The weak Lefschetz property of a special class of Artinian $\dots$]{The weak Lefschetz property of a special class of Artinian algebras over fields of positive characteristic}
\author{Hassan Haghighi and Sepideh Tashvighi}
\address{Hassan Haghighi, Sepideh Tashvighi, \ Faculty of Mathematics, K. N. Toosi
University of Technology, Tehran, Iran.} \email{haghighi@kntu.ac.ir,
sepid.tashvighi@email.kntu.ac.ir}
\keywords{Artinian $\mathbb{K}$-algebra, The Weak and Strong
Lefschetz Properties, Characteristic of the field}
\subjclass[2010]{Primary 13E10, 13C13 }
\begin{document}

\maketitle

\begin{abstract}

 In this paper, we study the dependence of the weak Lefschetz property
 of
  algebras defined by a special class of monomials ideals in a polynomial ring with coefficient in a field,
    to the characteristic of the base field.
\end{abstract}

 \section{Introduction}
Let $\mathbb{K}$ be an arbitrary infinite field, and let
$R=\mathbb{K}[x_1, \dots, x_r]$ be the polynomial ring  with
standard grading. Let $I$ be a homogeneous ideal in $R$ such that
$A=R/I$ is an Artinian algebra. This is equivalent to say that the
radical of $I$ is equal to $(x_1, \dots, x_r)$, or,  $A$ can be
written as $\bigoplus_{i=0}^e A_i$.

A significant property which a standard graded Artinian
$\mathbb{K}$-algebra may pose is the weak Lefschetz property (WLP
for short). A standard graded $\mathbb{K}$-algebra has the WLP, if
there exists a linear form in $R$ such that for each $0\le i \le
e-1$, the multiplication map $\times \ell:A_i \longrightarrow
A_{i+1}$ has maximal rank, i.e., it is injective or surjective. This
property, not only depends to algebraic structure of the algebra
$A$, but also depends on the characteristic of the base field
$\mathbb{K}$.

In addition to  intrinsic significance of the WLP for a standard
graded Artinian $\mathbb{K}$-algebra, this property is closely
related to some problems in other disciplines of mathematics. For
example the presence of this property is related to existence of
finite projective planes \cite{CMNZ, Maeno}, has connection with
some special family of curves in Algebraic Geometry \cite{Miro}, it
is associated with the problem of enumerating  the plane partitions
in combinatorics \cite{DU, LZ}.

 Even though, this property has
a simple definition, but establishing it for a general standard
graded Artinian $\mathbb{K}$-algebra, is not an easy task. As a
consequences of this fact, classifying all standard graded
$\mathbb{K}$-algebra which pose this property would be a hard
problem. This forces to look for this property in special classes of
standard graded Artinian $\mathbb{K}$-algebras. Among  such
algebras, those for which $I$ is a  monomial ideal, are the most
accessible and excellent  ones.

When the characteristic of the base field $\mathbb{K}$ is zero,
Stanley \cite{Stan1} showed that the algebra $\mathbb{K}[x_1, \dots,
x_r]/(x_1^{d_1}, \dots, x_r^{d_r})$, where $d_is$ are greater than
1, enjoys the SLP, and hence the WLP, but when the characteristic of
the base field is positive, this result is no longer hold. It can be
easily shown that whenever $\mathrm{char\ } \mathbb{K }=p$, and
$r\ge 3$, the algebra $\mathbb{K}[x_1,\dots, x_r]/(x_1^p, \dots,
x_r^p)$, does not have the WLP, while it poses this property for
$r\le 2$. In \cite{MI_MI}, it is shown that if the ideal
$I=(x_1^{d_1}, \dots, x_r^{d_r})$, where $d_1\ge \dots \ge d_r\ge
2$, satisfies the condition $d_1
>[t/2]$, where $t=d_1+d_2+\dots+d_r-n$, then $R/I$ has the WLP,
regardless of the characteristic of $\mathbb{K}$. Moreover, in
\cite[Proposition 3.5]{D}, it is shown that if the $\mathrm{char\
}\mathbb{K}=p>0$ and $d_1\le \lceil t/2 \rceil$, and if $d_2 \le p
\le d_1$ or for some positive integer $m$, the condition $d_1 \le
p^m\le \lceil t/2 \rceil$ holds, then $R/I$ fails to have the WLP.
In \cite{BK}, by a geometric method, the WLP of the algebra
$\mathbb{K}[x,y,z]/(x^d,y^d,z^d)$, in terms of integer $d$ and the
characteristics of the field $\mathbb{K}$ is determined. In
\cite{LZ}, it is proved that  if  $p=\mathrm{char\ }\mathbb{K}$ is a
prime divisor of the number of plane partitions $M(a,b,c)$,  then
the $\mathbb{K}$-algebra
$A=\mathbb{K}[x,y,z]/(x^{a+b},y^{a+c},z^{b+c})$
does not have the WLP. In \cite[Theorem 3.8]{LN}, a complete
classification of the SLP for all monomial ideals $I$, for which $I$
is a complete intersection and
 $\mathbb{K}>0$, is given. In \cite{DU}, by tools
which have been developed in \cite{JJ},  the WLP of the monomial
ideals of the form
$I=(x^{t+\alpha}y^{t+\beta}z^{t+\gamma},x^{\alpha}y^{\beta}z^{\gamma})$,
known as almost complete intersection, are investigated. In
particular, those characteristics that these type of ideals may fail
to have the WLP are determined in terms of the exponents $t,\alpha,
\beta, \gamma$.

In \cite{JM}, the weak Lefschetz property of algebras defined by
another class of monomial ideals which are in the form
\begin{align*}
I_{r,k,d}=(x_1^k, \dots, x_r^k) +(\mathrm{all \  squarefree
monomilas \ of \ degree\ } d)
\end{align*}
are studied and the WLP of these type of ideals, whenever $d=2$ and
$d=3$, are established  (\cite[Theorem 3.3]{JM}), and for general
case is stated as the following conjecture (see \cite[Conjecture
3.4.]{JM})
\begin{con}\label{conj}
Consider the  algebra $R/I_{r,k,d}$, where the ideal $I_{r,k,d}$, is
defined as
\begin{center}
$\left(  x_1^{k}, \dots, x_r^{k}\right) +\left( \mathrm{ all \
square \ free \ monomials \ of \ degree} \ d\right)$.
\end{center}
Then
\begin{flushleft}
(a) If $d=4$, then it has the WLP if and only if $k \mod 4$  is 2 or 3.\\
(b) If $d=5$, then  the WLP fails.\\
(c) If  $d=6$, then the WLP fails.
\end{flushleft}
\end{con}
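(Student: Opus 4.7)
The plan is to reduce WLP for $R/I_{r,k,d}$ to a concrete finite rank computation using the combinatorial structure of its monomial basis, and then to split the analysis according to the value of $d$.

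The standard monomial basis of $R/I_{r,k,d}$ consists of monomials whose support has at most $d-1$ variables and whose exponents are at most $k-1$. From this one derives the closed formula
\[
h(j) \;=\; \sum_{s=0}^{\min(d-1,r)}\binom{r}{s}\,c_{s,k-1}(j),
\]
where $c_{s,k-1}(j)$ counts compositions of $j$ into $s$ positive parts each bounded by $k-1$. Inspecting $h(j+1)-h(j)$ locates the peak degree $j_0$, and WLP reduces to checking maximal rank of $\times \ell$ in a short range of degrees around $j_0$. Since $I_{r,k,d}$ is $S_r$-invariant, it suffices to test with $\ell = x_1+\cdots+x_r$. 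Grading the basis by support size $s$, multiplication by $\ell$ is block upper-triangular, $\mu_j = D_j + N_j$, where $D_j$ preserves $s$ (raising one exponent, killing the result if it reaches $k$) and $N_j$ raises $s$ by one (adjoining a fresh variable). The squarefree generators impose the cutoff $s\le d-1$, so the support-$(d-1)$ block receives no contribution from $N_j$ on the target side; this is precisely where the rank obstruction concentrates.

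For $d = 4$, the blocks with $s<3$ should follow a pattern analogous to the $d=2,3$ analysis in \cite{JM} and produce no obstruction. The critical map is the support-$3$ block $V_{3,j_0}\to V_{3,j_0+1}$, whose entries are $0$ or $1$ coming from the choice of which of the three support variables is raised in $D_{j_0}$, truncated by the relations $x_i^k = 0$. After reductions driven by the $S_3$-action on the supporting triple of variables, I expect the rank defect to be controlled by a single integer $F(k)$, and the task is to show $F(k) \equiv 0$ modulo $\mathrm{char}\,\mathbb{K}$ precisely when $k \equiv 0,1 \pmod 4$, yielding both directions of the iff.

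For $d = 5$ and $d = 6$, WLP must fail for every $k$. I would construct an explicit element of $\ker\mu_{j_0}$ (or $\mathrm{coker}\,\mu_{j_0}$) as a signed sum supported on monomials of support $d-1$, exploiting the disappearance of $N_{j_0}$ there. The principal difficulty is the interaction between the blocks $D_j$ and $N_j$: a candidate witness killing $N_{j_0}$ on support $d-1$ must be lifted consistently back through $D_{j_0}$ on the smaller supports. For $d = 4$ this lift should survive exactly when $k \equiv 2, 3 \pmod 4$; for $d = 5, 6$ one must show the obstruction persists uniformly in $k$ and in $\mathrm{char}\,\mathbb{K}$, which is the most delicate part. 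A secondary obstacle is that the peak of $h(j)$ depends on $k$ and $r$, so the analysis has to be calibrated to produce a single universal critical degree $j_0$ at which the failure occurs.
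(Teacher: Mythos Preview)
The statement you are attempting to prove is not a theorem of the paper but a conjecture quoted from \cite{JM}; the paper gives no proof of it. On the contrary, Theorem~\ref{M} shows that part~(a) is \emph{false} without a characteristic hypothesis: for $k=2$ or $k=3$ one has $k\bmod 4\in\{2,3\}$, yet in characteristic~$3$ the algebra $R/I_{r,k,4}$ fails the WLP. The paper's actual contribution to the conjecture is a set of characteristic-specific counterexamples and partial confirmations (Theorems~\ref{M} and~\ref{M2}, in characteristics~$3$ and~$2$), each obtained by writing down an explicit nonzero $f\in A_i$ with $\ell f=0$; it does not attempt the full conjecture.

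Independently of this, your proposal is a plan rather than a proof. The decisive claims---that the rank defect in the top support block is governed by a single integer $F(k)$ with $F(k)\equiv 0\pmod{\mathrm{char}\,\mathbb{K}}$ exactly when $k\equiv 0,1\pmod 4$, and that for $d=5,6$ an explicit kernel witness can be lifted through the $D_j$/$N_j$ interaction uniformly in $k$---are stated as expectations and left unresolved. The first cannot hold as written: a fixed nonzero integer $F(k)$ cannot be divisible by \emph{every} prime when $k\equiv 0,1\pmod 4$ and by \emph{no} prime when $k\equiv 2,3\pmod 4$, and Theorem~\ref{M} already exhibits a prime ($p=3$) at which WLP fails for $k=2,3$. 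So either the conjecture must be read in characteristic zero---in which case your reduction should target $F(k)=0$ as an integer, not $F(k)\equiv 0\pmod p$---or the biconditional in part~(a) is simply false and no proof is possible.
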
\label{JU_MI_NA}
The proof of part $(b)$ of \cite [Theorem 3.3]{JM}, and the above
conjecture has motivated us to investigate the presence of the WLP
for a class of monomial  ideals which their generators are nearly
similar to the generators of $I_{r,k,d}$. I.e., we consider the
ideals of type
 \begin{align*}
 I=\left(  x_1^{\alpha_1}, \dots, x_r^{\alpha_r}\right) +\left(\textrm{ all\ square \ free\ monomials\ of \ degree}\ d
 \right),
\end{align*}
where $2 \le \alpha_i \le 4$ for $1 \le i \le r$, and study the WLP
behavior of $R/I$ with respect to small prime numbers.

 Our main results
are:

\noindent{\bf Theorem A.} Let $I$ be as the above ideal, where
 $ 2 \le \alpha_i \le 3$ for all $d\geq 4$ and $1 \le i \le r$. Then
 $A=R/I$ does not have the WLP, whenever $\mathrm{char}\ \mathbb{K}=3$.

In particular, if all $\alpha_i$s, are equal to 2,  or are equal to
3, then Theorem A, implies the part $(a)$ of  the conjecture
\ref{JU_MI_NA} can not be  true, while it confirms what is claimed
in  parts $(b),(c)$ of \ref{conj} for a specific value.

 \noindent{\bf Theorem B.} Let $I$ be as the ideal, where
 $2 \le \alpha_i \le 4$ for all $d \geq 5$ and $1 \le i \le r$. Then
 $A=R/I$ does not have the WLP,  whenever $\mathrm{char}\ \mathbb{K}=2$.

Another result of this paper is a little bit different from the
other results. In fact, we determined  all characteristics of the
base field which a special type of ideals  define an algebra without
the WLP.

\begin{prop} 
Let
\begin{center}
$I^{'}=\left( x_1^2, \dots, x_r^2\right) +\left( \mathrm{all \
squarefree \ monomials \ of \ degree} \ d\right)$
\end{center}
be an ideal in $R$. Then $R/I^{'}$ is a level algebra and
$e=\mathrm{ Socle Degree }(I^{'})=d-1$. Moreover, $R/I^{'}$ doesn't
have the WLP in $\mathrm{char}\ \mathbb{K}= \mathit{p}$ whenever
$\mathit{p}$ is a prime number less than  $i+2$, where $1 \leq i
\leq \lceil r/2 \rceil$.
\end{prop}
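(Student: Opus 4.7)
The plan is to first establish the structural claim (level algebra with socle degree $d-1$) by a direct monomial basis analysis, and then to prove failure of the WLP in characteristic $p$ by exhibiting explicit kernel elements for multiplication by a generic linear form.

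For the level structure I would use that $A = R/I'$ admits a $\mathbb{K}$-basis consisting of the squarefree monomials $m_S = \prod_{k \in S} x_k$ with $|S| \leq d-1$, so $\dim_{\mathbb{K}} A_i = \binom{r}{i}$ for $0 \leq i \leq d-1$. Locating the socle is then a one-step combinatorial check: if $|S| < d-1$, choosing any $k \notin S$ gives $x_k \cdot m_S = m_{S \cup \{k\}} \neq 0$, so $m_S$ is not in the socle; whereas if $|S| = d-1$, every variable $x_k$ either produces a square in $I'$ (when $k \in S$) or a squarefree degree-$d$ monomial in $I'$ (when $k \notin S$), both of which vanish in $A$. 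Hence $\operatorname{Soc}(A) = A_{d-1}$, giving levelness and socle degree $d-1$.

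To tackle the WLP I would first reduce to a canonical linear form. Because $I'$ is invariant under the diagonal rescaling $x_k \mapsto a_k x_k$ (with every $a_k \neq 0$), any generic $\ell = \sum a_k x_k$ is equivalent, by such a change of variables, to $\ell_0 := x_1 + \cdots + x_r$; hence the rank of $\mu_\ell$ equals that of $D := \mu_{\ell_0}$ on a Zariski-dense open set, and WLP is detected by $D$. The core calculation, by a short induction on $n$, is
\[
D^n(m_S) \;=\; n! \sum_{\substack{T \cap S = \emptyset \\ |T|=n}} m_{S \cup T},
\]
valid whenever $|S|+n \leq d-1$. In characteristic $p$ the coefficient $p!$ vanishes, so $D^p \equiv 0$ on every graded piece of $A$ where the identity is defined.

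Combining this nilpotency with Wilson's theorem $((p-1)! \not\equiv 0 \pmod p)$ produces the required kernel elements: for any $i$ with $p-1 \leq i \leq d-2$ and any $S \subseteq \{1,\ldots,r\}$ with $|S| = i-p+1$, the element $\omega_S := D^{p-1}(m_S) \in A_i$ is nonzero while $D(\omega_S) = D^p(m_S) = 0$, so $D\colon A_i \to A_{i+1}$ is not injective. Restricting to $1 \leq i \leq \lceil r/2 \rceil$ ensures $\dim A_i \leq \dim A_{i+1}$ (with equality in the odd-$r$ boundary case), so non-injectivity rules out maximality of rank and WLP fails; rewriting $p-1 \leq i$ as $p < i+2$ yields precisely the stated conclusion. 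The main technical obstacle will be the boundary case $i = \lceil r/2 \rceil$ when $r$ is even, where $\dim A_{i+1} < \dim A_i$ makes maximal rank mean surjectivity rather than injectivity; to convert the kernel element into an obstruction to surjectivity I would run a supplementary cokernel computation or invoke a duality/transpose argument that exploits the level structure established in the first step.
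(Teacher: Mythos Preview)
Your argument is correct in substance and lands on the same kernel element as the paper, but by a longer route. The paper simply takes
\[
f=\sum_{1\le j_1<\cdots<j_i\le r} x_{j_1}\cdots x_{j_i}\in A_i
\]
and observes directly, by the double-count you also perform, that $\ell f=(i+1)\cdot\!\sum_{|T|=i+1}m_T$, so $\ell f=0$ whenever $p\mid(i+1)$. Your element $\omega_\emptyset=D^{p-1}(1)=(p-1)!\sum_{|T|=p-1}m_T$ is exactly $(p-1)!$ times the paper's $f$ at $i=p-1$; the iterated-$D$ formula and Wilson's theorem are valid but not needed, since one multiplication by $\ell$ already exhibits the coefficient $i+1$. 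What your formulation buys is a family of kernel elements $\omega_S$ for every $i\ge p-1$, not just $i=p-1$, and it makes the nilpotency $D^p=0$ explicit; the paper's version is shorter but yields only the single symmetric element.

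One small correction: your claim that $1\le i\le\lceil r/2\rceil$ forces $\dim A_i\le\dim A_{i+1}$, with equality in the odd-$r$ boundary case, is off by a floor/ceiling. For odd $r$ one has $\lceil r/2\rceil=(r+1)/2$ and $\binom{r}{(r+1)/2}>\binom{r}{(r+3)/2}$, so the strict drop occurs at the top of the range for \emph{both} parities; the equality you have in mind is at $i=\lfloor r/2\rfloor$ when $r$ is odd. Thus the supplementary surjectivity/duality argument you flag for even $r$ is in fact needed for the endpoint $i=\lceil r/2\rceil$ regardless of parity. The paper's proof does not address this boundary subtlety either; it records $h_0\le\cdots\le h_{\lceil r/2\rceil}$ and proceeds with the injectivity failure without separating the top degree.
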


The method of proof of  Theorem A and Theorem B, can be applied to
prove  the failure of the WLP for the following class of monomial
ideals:
\begin{align*}
J=\left( x_1^{\alpha}, \dots, x_r^{\alpha},
x_1^{\alpha-2}x_2^{2},x_1^2x_2^{\alpha -2},
\dots,x_{r-1}^{\alpha-2}{x_{r}}^{2},x_{r-1}^{2}{x_{r}}^{\alpha-2}
\right),
\end{align*}
provided $r\ge 4, \alpha \ge 5.$

 \noindent{\bf Theorem C.} Let $J$ be as the above ideal  in $R$.
Let $\mathrm{char}\ \mathbb{K}=2$. Then  $R/J$ does not have the
WLP.

\section{Preliminaries}

Let $A=R/I=\bigoplus_{i=0}^e A_i$ be a standard graded Artinian
$\mathbb{K}$ algebra. Then the function $h_i=\dim A_i$, for $0\le i
\le e$ is called the Hilbert function of $A$. Since $\dim A_i=0$ for
$i>e$, this function can be represented as an array
$\textbf{h}=(h_0,h_1, \dots, h_e)$, which is called the
$\textbf{h}$-vector of $A$. If we denote the maximal ideal of $A$ by
$\mathfrak{m}$, then the ideal
$$(0:_A \mathfrak{m})=\{ a\in \mid
a\mathfrak{m}=0\}=\mathcal{U}_0\oplus \mathcal{U}_1\oplus \dots
\oplus \mathcal{U}_e,$$ is called the socle of $A$. Since this ideal
gathers the annihilators of $\mathfrak{m}$, its structure is closely
related to the WLP of the algebra $A$. Since $A_{e+1}=0$, it is
clear that $A_e\subset (0:_A \mathfrak{m})$. The integer $e$ is
called the socle degree of $A$. Moreover, if $\mathcal{U}_i=0$ for
all $i <e$, then $A$ is called a level algebra.

\begin{defn}
Let $\ell$ be a general linear form in $R$. We say that the Artinian
ring $A$ has the {\it weak Lefschetz property} (WLP for short) if
the homomorphism induced by multiplication by $\ell$,
\begin{center}
$\times \ell: A_{i}\longrightarrow A_{i+1}$,
\end{center}
has the maximal rank for every $i$, $0\le i \le e-1$ (i.e., it is
injective or
surjective). In this case, the linear form $\ell$ is called the Lefschetz element for $A$.\\
We say that $A$ has the {\it strong Lefschetz property} the (SLP) if
\begin{center}
$\times \ell^{d}: A_{i}\longrightarrow A_{i+d}$
\end{center}
has the maximal rank for every $i$ and $d$, with $0\le i \le e-2$
and $1 \le d \le e-1$.
\end{defn}
If a standard graded $\mathbb{K}$-algebra $A$, has a Lefschetz
element $\ell$, then it can be shown that there is a Zariski open
set in $\mathbb{P}^{r-1}=\mathbb{P}(\mathbb{K}[x_1,\dots,x_r]_1)$
which parameterizes all Lefschetz elements of $A$.

In \cite[Proposition 2.2]{JJ}, it is shown if the field $\mathbb{K}$
is infinite, and the monomial Artinian $I$  satisfies the WLP, then
the linear form $\ell=x_1+\dots+x_r$ would be a Lefschetz element
too. This simplifies the checking for posing or failure of the WLP.
Moreover, in \cite[Proposition 4.3]{LN}, the assumption of being
infinite for $\mathbb{K}$ has been weaken. This will allows us to
use finite fields to construct examples or counterexamples for
posing of failure of the WLP.


\begin{prop}(\cite[Proposition 4.3]{LN})
Let $\mathbb{K}$ be a field and let $\mathbb{K}^{'}$ be an extension
field of $\mathbb{K}$. Let $I\subset \mathbb{K}[x_1, \dots, x_r]$ be
a monomial ideal. Then the following are equivalent.
\begin{itemize}
\item[(a)] $A=\mathbb{K}[x_1, \dots, x_r]/I$ has the WLP.
\item[(b)] $A^{'}=A\otimes_{\mathbb{K}} \mathbb{K}^{'}$ has the WLP.
\item[(c)] $x_1+\dots+x_r$ is a weak Lefschetz element of $A$.
\item[(d)] $x_1+\dots+x_r$ is a weak Lefschetz element of $A^{'}$
\end{itemize}

\end{prop}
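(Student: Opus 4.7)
The plan is to prove the four equivalences by establishing a short cycle, with the essential input being that $I$ is monomial. The implications (c) $\Rightarrow$ (a) and (d) $\Rightarrow$ (b) are immediate, since each exhibits a specific weak Lefschetz element. The two nontrivial tasks are to compare $A$ with $A'$ under base change and to upgrade the mere existence of a Lefschetz element to the statement that $L := x_1 + \cdots + x_r$ itself works.

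For (a) $\Rightarrow$ (b), I would use that any field extension $\mathbb{K}'/\mathbb{K}$ is faithfully flat, so for every $\mathbb{K}$-linear map $f\colon V \to W$ of finite-dimensional vector spaces one has $\mathrm{rank}(f) = \mathrm{rank}(f \otimes_{\mathbb{K}} \mathbb{K}')$. Applied to $\times \ell \colon A_i \to A_{i+1}$, this shows that a given $\ell \in A_1$ is a weak Lefschetz element of $A$ iff $\ell \otimes 1$ is one of $A'$; transporting the Lefschetz element of $A$ then yields (b), and specializing to $\ell = L$ yields (c) $\Leftrightarrow$ (d).

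For the hard direction, the monomial hypothesis on $I$ becomes decisive. The set of standard monomials of degree $i$ outside $I$ is a basis $\mathcal{B}_i$ of $A_i$ that is common to every coefficient field. In this basis, multiplication by $L$ is represented by a matrix with entries in $\{0,1\} \subset \mathbb{Z}$, so its rank (computed via the vanishing of minors) depends only on $\mathrm{char}(\mathbb{K})$. Consequently $L$ is a weak Lefschetz element of $\mathbb{F}[x_1,\dots,x_r]/I$ for one field $\mathbb{F}$ of a given characteristic iff it is so for every such field; in particular this holds over an algebraic closure $\overline{\mathbb{K}}$.

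Finally, I would close the cycle via (a) $\Rightarrow$ (c). If $A$ has the WLP then by faithful flatness so does $A \otimes_{\mathbb{K}} \overline{\mathbb{K}}$, and since $\overline{\mathbb{K}}$ is infinite, \cite[Proposition 2.2]{JJ} forces $L$ to be a Lefschetz element there; the characteristic-only dependence from the preceding paragraph transports this conclusion back to $A$, giving (c). The main obstacle is precisely this last step: the classical result of \cite{JJ} assumes an infinite base field, which is not available when $\mathbb{K}$ is finite, and a direct genericity argument fails for lack of enough linear forms. The remedy is the two-step detour through $\overline{\mathbb{K}}$, where infiniteness is restored by base change and where the monomial-matrix invariance guarantees that the resulting Lefschetz property of $L$ descends back to $\mathbb{K}$.
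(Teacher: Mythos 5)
The paper does not actually prove this proposition: it is imported verbatim from \cite[Proposition 4.3]{LN} and used as a black box, so there is no in-paper argument to compare yours against. Judged on its own terms, your proof is sound and is essentially the standard argument underlying the cited result: rank is detected by minors and hence invariant under field extension, giving (a) $\Rightarrow$ (b) and (c) $\Leftrightarrow$ (d); the matrix of $\times(x_1+\cdots+x_r)$ in the common basis of standard monomials has entries in $\{0,1\}\subset\mathbb{Z}$ (each $x_jm$ is either in $I$ or a distinct basis monomial), so whether $L$ is a Lefschetz element depends only on $\mathrm{char}\,\mathbb{K}$; and the detour through the infinite field $\overline{\mathbb{K}}$, where \cite[Proposition 2.2]{JJ} applies, yields (a) $\Rightarrow$ (c). The one loose end is logical rather than mathematical: as written, your cycle never exits vertex (b). You obtain (a) $\Rightarrow$ (c) $\Rightarrow$ (d) $\Rightarrow$ (b) and (a) $\Rightarrow$ (b), but no implication whose hypothesis is (b), so the four statements are not yet all equivalent. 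The fix is one sentence you have already implicitly supplied: your (a) $\Rightarrow$ (c) argument is field-agnostic, so running it with $\mathbb{K}'$ in place of $\mathbb{K}$ (and noting $\mathrm{char}\,\mathbb{K}'=\mathrm{char}\,\mathbb{K}$, so the characteristic-only invariance still transports the conclusion between $A'$ and $A$) gives (b) $\Rightarrow$ (d), closing the equivalence; this should be stated explicitly.
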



\section{Main Results}

In this section, we prove the main results of this paper. The
Artinian algebras that we consider are defined by the ideals of the
following form.
\begin{align*}
I=\left(  x_1^{\alpha_1}, \dots, x_r^{\alpha_r}\right) +\left( all \
square \ free \ monomials \ of \ degree \ d\right),
\end{align*}
where $2\le \alpha_i \le 4$ and $2 \le d \le r$.

 The main idea of the proof of the main
results, is to construct a homogenous polynomial $f \in A=R/I$, such
that for suitable indices $i$, the map $\times \ell: A_i
\longrightarrow A_{i+1}$ fails to be injective.


\begin{thm}({\bf Theorem A.})\label{M}
Let $I$ be as above ideal, where  $d\geq 4$ and $ 2 \le \alpha_i \le
3$ for all $1 \le i \le r$. If  $\mathrm{char}\ \mathbb{K}=3$, then
 $A$ does not have the WLP.
\end{thm}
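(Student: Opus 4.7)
The plan is to invoke the reduction of \cite{LN} (recalled as the unnumbered proposition in the preliminaries) so that WLP can be tested against the particular linear form $\ell = x_{1}+\cdots+x_{r}$, and then to produce a nonzero element of $A_{2}$ annihilated by $\ell$, while the Hilbert function of $A$ forces $\times\ell\colon A_{2}\to A_{3}$ to be injective if the WLP were to hold.

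The key observation is a Frobenius-style collapse in characteristic three. Since $\mathrm{char}\,\mathbb{K}=3$, every multinomial coefficient $\binom{3}{a_{1},\dots,a_{r}}$ with each $a_{i}<3$ is divisible by $3$, so
\[
\ell^{3} \;=\; \sum_{i=1}^{r} x_{i}^{\,3}\qquad\text{in } R.
\]
Because $2\le\alpha_{i}\le 3$, the generator $x_{i}^{\alpha_{i}}$ of $I$ divides $x_{i}^{3}$, hence $x_{i}^{3}\in I$ for every $i$, and therefore $\ell^{3}=0$ in $A$.

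Now set $f:=\ell^{2}\in A_{2}$. Expanding and reducing modulo $I$,
\[
f \;=\; \sum_{i:\,\alpha_{i}=3} x_{i}^{\,2} \;+\; 2\sum_{i<j} x_{i}x_{j},
\]
since $x_{i}^{2}=0$ in $A$ exactly when $\alpha_{i}=2$ and, because $d\ge 4>2$, no squarefree relation kills $x_{i}x_{j}$. These terms are distinct elements of the standard monomial basis of $A_{2}$, so $f\neq 0$. On the other hand $\ell\cdot f=\ell^{3}=0$ in $A$, so $f$ lies in $\ker\bigl(\times\ell\colon A_{2}\to A_{3}\bigr)$, which is the desired failure of injectivity.

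To convert this into a failure of the WLP, I would verify $h_{2}\le h_{3}$ so that maximal rank of $\times\ell\colon A_{2}\to A_{3}$ would in fact require injectivity. A direct monomial count gives
\[
h_{2}=\binom{r}{2}+\#\{i:\alpha_{i}=3\},\qquad h_{3}=\binom{r}{3}+(r-1)\,\#\{i:\alpha_{i}=3\},
\]
and under the standing hypothesis $d\ge 4$ with $r\ge d$ this inequality is routine, with only the borderline configuration $r=d=4$ and all $\alpha_{i}=2$ warranting separate comment. The main obstacle is precisely this Hilbert-function bookkeeping; the algebraic heart of the argument---the identities $\ell^{3}\in I$ and $\ell^{2}\neq 0$---is immediate from the characteristic-three multinomial identity.
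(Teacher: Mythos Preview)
Your argument is correct and noticeably cleaner than the paper's. The paper constructs the explicit element
\[
f=\sum_{1\le i<m\le r}\bigl(x_i^{2}-x_ix_m\bigr)\in A_2
\]
and then performs a term-by-term analysis of $\ell f$: the non-squarefree pieces cancel in pairs or lie in $I$, while each squarefree cubic $x_ix_mx_t$ is shown to occur with total multiplicity $3$ via the double count $(r-2)\binom{r}{2}=3\binom{r}{3}$, whence $\ell f=0$ in characteristic~$3$. Your choice $f=\ell^{2}$ replaces all of this bookkeeping by the single Frobenius identity $\ell^{3}=\sum_i x_i^{3}$, which already lies in $I$ because every $\alpha_i\le 3$; the combinatorial counting disappears entirely. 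The gain is conceptual transparency, and the same idea adapts at once to the paper's Theorem~B in characteristic~$2$ (using $\ell^{4}=\sum_i x_i^{4}$ when $\alpha_i\le 4$, or $\ell^{2}=\sum_i x_i^{2}$ when all $\alpha_i=2$). The paper's hands-on cancellation method, by contrast, is what they reuse for Theorem~C, where the ideal is no longer of the shape $(x_i^{\alpha_i})+(\text{squarefrees})$ and no small power of $\ell$ obviously lands in it.

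On the Hilbert-function step, both you and the paper defer $h_2\le h_3$ to a side remark and both flag the borderline configuration $r=d=4$ with every $\alpha_i=2$ without actually treating it. It is worth recording that in that single configuration $\times\ell\colon A_2\to A_3$ is in fact surjective in characteristic~$3$ (the relevant $4\times4$ minor has determinant $-2\equiv 1$), so this case needs to be excluded from the statement rather than merely postponed; your write-up at least signals the issue explicitly.
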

\begin{proof}
According to Remark \ref{r1}, $h_2\le h_3$.  
We show that the map $\times \ell: A_2 \longrightarrow A_3$ can not
be injective. To prove our claim, let
\begin{align*}
f =\sum_{ 1 \le i < m \le r } (-1)^jx_i^jx_m^{2-j}, \mathrm{ \ \
where \ } j=1,2.
\end{align*}
It is clear that $f$ is a nonzero element of $ A_2$. Moreover, in
$A_3$, the element $f \times \ell $, consists of the following
terms:
$$
\begin{array}{llllllllll}
& \mathrm{ for }& j=1,&  - x_i^2x_m,& -x_ix_m^2,& -x_ix_mx_{t},&
\mathrm { where }& t \neq i, m,\ 1 \le t \le r; & \\
& \mathrm{ for  }& j=2,& \ \ x_i^2x_m,&\ \ x_i^2x_k,&\ \
x_i^3,&\mathrm{ where }& 1 \le k < i < m \le r. & &
\end{array}
$$
Notice that for those $\alpha_i s$ which are equal to two, the above
terms can not be appeared in $f \times \ell$. Moreover, as can be
seen from the above expressions, for $j=1$ and $j=2$, the first two
terms  of the first row are respectively, additive inverses of the
first two terms of the second row. Hence, their sum in $f \times
\ell$ cancel each other.

Moreover,  each term $-x_ix_m^2$ in the first row, is the additive
inverse of a monomial $x_i^2x_k$, in the second row, and vice versa.
Hence, their sum in $f \times \ell$ is zero.

On the other hand, by our assumption, $x_i^3$ modulo $I$ is zero.
Hence, the only terms which will be remained in $f \times \ell$, are
in the form $-x_ix_mx_{t}$, where $t \neq i,m$ and $ 1 \le t \le r$.
We  count the occurrences of these monomials in $f\times \ell$ via
two methods.

 {\it First method.} We know that
the number of squarefree monomials of degree 3 in  $r$ variables  is
equal to ${r \choose 3}$. We label these  monomials as $a_{1},
a_{2}, \dots, a_{{r \choose 3}}$ and set $M= a_{1}+ a_{2}+ \dots+
a_{{r \choose 3}}$.

 {\it Second method.} By definition of $f$, the terms
of $f \times \ell$ are products of all $x_{i}x_{m}$ and the
different variables with $x_{i}$ and $x_{m}$ that vary  in $\{x_1,
\dots, x_r\}$. The possible number of  such monomials is equal to
\begin{align*}
(r-2) {r \choose 2}=\frac{r(r-1)(r-2)}{2}.
\end{align*}
We label these  monomials as $b_{1}, b_{2}, \dots, b_{(r-2) {r
\choose 2}}$.

 By comparing the results of these two methods, we
observe that the monomials $x_ix_mx_t$ of  $f \times \ell$, have a
coefficient equal to 3 in $A_{3}$, since
 $b_{1}+ b_{2}+ \dots+ b_{(r-2) {r \choose 2}}=3( a_{1}+ a_{2}+ \dots+ a_{{r \choose 3}})=3M$.
Therefore, $f \times \ell =0$ modulo $I$, while $f \neq 0$. This
means the kernel of the map $\times \ell: A_2\longrightarrow A_3$ is
nontrivial. Hence, $A$ does not have the WLP.
\end{proof}
 \begin{rem}\label{r1}
 Let $I$ be as the above ideal, where  $d\geq 4$ and $ 2 \le \alpha_i \le 3$ for all
$1 \le i \le r$. The number of monomials of degree $i$ in $R_i$, is
equal to $i+(r-1) \choose r-1$. Hence $h_2= {2+(r-1) \choose r-1}
\leq h_3= {3+(r-1) \choose r-1}$, but we should remove those
monomials which are multiples of the generators of $I$. Let $m$ be
the number of these  degree 2 terms  and $n=r-m$ be the number of
these degree 3 terms. Then we have
\begin{center}
$h_2= {2+(r-1) \choose r-1}-m$ and $h_3= {3+(r-1) \choose
r-1}-(m.r)-n$
\end{center}
If $h_2 \leq h_3$, we are nothing to do and it is the case that  we
need for our argument, but  if $h_3 < h_2$, then this would be the
exceptional cases which should be avoided. Hence, if we evaluate
both sides of this inequality, then
\begin{center}
$h_2= {2+(r-1) \choose r-1}-m>h_3= {3+(r-1) \choose
r-1}-(m.r)-n={3+(r-1) \choose r-1}-(m.r)-(r-m),$
\end{center}
which implies  $7r+6rm> r^3+12m$. Note that $m$ and $ n $ varies in
$\{0,1,\dots,r\}$.

Notice that whenever $r$ becomes larger, the above inequality is no
longer hold. Therefore, these exceptional cases happen whenever
 $m=r=4$.
\end{rem}
In \cite[Theorem 4.3]{JM}, it is proved that the Artinian algebra
defined by the ideal in the general form
\begin{align*}
I_{r,r}=(x_1^r, \dots, x_r^r, x_1x_2\cdots x_r),
\end{align*}
fails to have the WLP. As a special case of the above theorem, we
can state the following  result.
\begin{cor}
Let the ideal $I$ be as in Theorem \ref{M}. If $d=r$, then the
algebra $R/I$ does not have the WLP.
\end{cor}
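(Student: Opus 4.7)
The plan is to view the corollary as an essentially immediate specialization of Theorem A. The crucial structural observation is that when $d = r$, the set of squarefree monomials of degree $d$ in the $r$ variables reduces to the single monomial $x_1 x_2 \cdots x_r$ (since $\binom{r}{r}=1$). Consequently, the ideal takes the form
$$I = (x_1^{\alpha_1}, \dots, x_r^{\alpha_r},\ x_1 x_2 \cdots x_r),$$
which is a close analog of the ideal $I_{r,r} = (x_1^r, \dots, x_r^r, x_1 \cdots x_r)$ of [JM, Theorem 4.3], the difference being that here the pure-power exponents are restricted to $\{2,3\}$ instead of all equalling $r$. This is precisely why the relevant obstruction is the characteristic-$3$ phenomenon of Theorem A rather than the characteristic-free argument of [JM].

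Next I would check that every hypothesis of Theorem A remains in force under the specialization $d = r$: since $d \ge 4$, we automatically get $r \ge 4$, while the bounds $2 \le \alpha_i \le 3$ and the assumption $\mathrm{char}\,\mathbb{K} = 3$ are inherited verbatim. Thus Theorem A applies directly. The element
$$f = \sum_{1 \le i < m \le r}(-1)^{j}\, x_i^{j}\, x_m^{2-j}\quad (j \in \{1,2\})$$
constructed in the proof of Theorem A is a nonzero class in $A_2$ whose product with $\ell = x_1 + \cdots + x_r$ is, after the cancellations among $x_i^2 x_m$, $x_i x_m^2$ and $x_i^3$ terms modulo $I$, a sum of degree-$3$ squarefree monomials $x_i x_m x_t$ each appearing with coefficient $3$. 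In characteristic $3$ this image is zero, so $\times \ell \colon A_2 \to A_3$ has nontrivial kernel; combined with $h_2 \le h_3$ from Remark \ref{r1}, this defeats the WLP.

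There is no substantive obstacle: the corollary is essentially a one-line invocation of Theorem A. The only point worth a sentence of caution is the narrow exceptional case $r = 4$ with all $\alpha_i = 2$ flagged in Remark \ref{r1}, where $d = r = 4$ and $h_2 = 6 > h_3 = 4$ so that non-injectivity of $\times \ell \colon A_2 \to A_3$ would not by itself rule out the WLP; for this degenerate instance the corollary must either be read as implicitly excluding the $(r,\alpha)$ pair flagged in Remark \ref{r1}, or supplemented by a direct computation on $\mathbb{K}[x_1,\dots,x_4]/(x_1^2,\dots,x_4^2, x_1x_2x_3x_4)$. For every other admissible choice of $(r,\alpha_i)$ with $d = r$ the conclusion follows from Theorem A without any further work.
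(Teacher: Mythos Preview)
Your proposal is correct and follows exactly the paper's approach: the paper offers no separate proof for this corollary, simply introducing it with the phrase ``As a special case of the above theorem, we can state the following result,'' so your reduction to Theorem~A is precisely what is intended. Your additional remark about the exceptional case $r=4$, $\alpha_i\equiv 2$ flagged in Remark~\ref{r1} is in fact more careful than the paper itself, which does not address this point explicitly for the corollary.
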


Now we can state a similar result for the case $\mathrm{char}\
\mathbb{K}=2$.

\begin{thm}({\bf Theorem B.})\label{M2}
Let $I$ be  the ideal as in Theorem \ref{M}. Let $d \geq 5$ and $2
\le \alpha_i \le 4$ for all $1 \le i \le r$. Then
 $A$ does not have the WLP,  whenever $\mathrm{char}\ \mathbb{K}=2$.
\end{thm}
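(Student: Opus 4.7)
The plan is to mimic the proof of Theorem~\ref{M} one degree higher: construct an explicit $f\in A_3$ with $f\ne 0$ and $\ell\cdot f = 0$ in $A_4$, where $\ell = x_1+\dots+x_r$. This will show that $\times\ell\colon A_3\to A_4$ is not injective; together with the Hilbert-function inequality $h_3\le h_4$ (established by a count on the monomial generators of $I$, as in Remark~\ref{r1} but one degree higher), this yields the failure of the WLP.

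The candidate I would use is
\[
f \;=\; \sum_{i=1}^{r} x_i^{3} \;+\; \sum_{1\le i\ne j\le r} x_i^{2}x_j,
\]
which is simply the characteristic~$2$ expansion of $\ell^{3}$ (the multinomial coefficients $3$ and $6$ reduce to $1$ and $0$ mod~$2$). The key identity is Frobenius in characteristic~$2$:
\[
\ell\cdot f \;=\; \ell^{4} \;=\; (\ell^{2})^{2} \;=\; \Bigl(\sum_{i=1}^{r} x_i^{2}\Bigr)^{2} \;=\; \sum_{i=1}^{r} x_i^{4},
\]
and the hypothesis $\alpha_i\le 4$ gives $x_i^{\alpha_i}\mid x_i^{4}$ for every $i$, so $\ell\cdot f\in I$. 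In the combinatorial style of Theorem~\ref{M}, one verifies this directly: every monomial appearing in $\ell\cdot f$ is of one of the forms $x_i^{4}$, $x_i^{3}x_j$ with $i\ne j$, $x_i^{2}x_j^{2}$ with $i\ne j$, or $x_i^{2}x_jx_k$ with $i,j,k$ distinct. The first type lies in $I$, and the last three each arise in exactly two ways from the two sums making up $f$ (for instance, $x_i^{3}x_j$ appears as $x_i^{3}\cdot x_j$ from the first sum and as $x_i^{2}x_j\cdot x_i$ from the second), so their total coefficients equal $2$ and vanish modulo~$2$.

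For $f\ne 0$ in $A_3$, note that the monomials in the support of $f$ are pairwise distinct and that $d\ge 5$ precludes any interference from the squarefree generators of $I$ in degree $3$; thus $f$ is nonzero whenever some $x_i^{2}x_j$ with $j\ne i$ survives modulo $I$, which happens as soon as $\alpha_i\ge 3$. In the degenerate subcase $\alpha_1=\dots=\alpha_r=2$ one has $\ell^{2}=\sum_{i}x_i^{2}\in I$, hence $f=\ell^3=0$; but then the map $\times\ell\colon A_1\to A_2$ already fails injectivity, since $\ell\mapsto\ell^{2}=0$ while $\ell\ne 0$ in $A_1$ and $h_1=r\le\binom{r}{2}=h_2$ for $r\ge 5$. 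The principal obstacle I foresee is the Hilbert-function bookkeeping needed to verify $h_3\le h_4$ uniformly across all admissible exponent patterns $(\alpha_1,\dots,\alpha_r)$; the algebraic cancellation itself reduces to the single Frobenius identity above.
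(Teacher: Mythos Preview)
Your proposal is correct and follows the same overall strategy as the paper: exhibit a nonzero $f\in A_3$ with $\ell f=0$ in $A_4$, combined with $h_3\le h_4$. The paper takes
\[
f=\sum_{1\le i<m\le r}\bigl(x_i x_m^{2}+x_i^{2}x_m+x_i^{3}\bigr)
\]
and verifies $\ell f\equiv 0$ by a term-by-term combinatorial count, showing each surviving monomial type occurs an even number of times. Your element is the same up to the paper's indexing ambiguity (in characteristic~$2$ it is $\ell^{3}$), but your justification via the Frobenius identity $\ell^{4}=(\ell^{2})^{2}=\sum_i x_i^{4}\in I$ replaces all of that bookkeeping with a single line; this is a genuine simplification. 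You also notice and correctly dispose of the degenerate case $\alpha_1=\dots=\alpha_r=2$, where $f=\ell^{3}=0$ in $A_3$ and one must instead use $\ell\in\ker(\times\ell\colon A_1\to A_2)$; the paper's argument does not address this boundary case, so your treatment is in fact more complete. The only remaining work, as you note, is the routine verification $h_3\le h_4$, which the paper handles in Remark~\ref{r2}.
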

\begin{proof}
According to Remark \ref{r2}, $h_3\le h_4$. We show that the map
$\times \ell: A_3 \longrightarrow A_4$ can not be injective. To
prove this, let $f\in A_3$ be as follow
\begin{align*}
f=\sum_{1\le i<m\le r}x_i^jx_m^{3-j} \ \ \mathrm{ where \ } j=1,2,3.
\end{align*}
Then terms of the polynomial $f \times \ell$ can be grouped together
in terms of the value of $j$ and consists of
$$
\begin{array}{llllll}
\mathrm{for \ } j=1, & x_i^2x_m^2,& x_ix_m^3,&  x_ix_m^2x_t; &
 t \neq i, m, 1 \le  t  \le r, & \\
\mathrm{for \ } j=2, & x_i^3x_m,&  x_i^2x_m^2,&  x_i^2x_mx_t; &
 t \neq i, m, 1 \le  t  \le r, &
\\ \mathrm{for \ } j=3, &  x_i^4,&  x_i^3x_m,& x_i^3x_k;&
 1\le k < i<m   \le r.
\end{array}
$$
As the above list shows, the terms $x_i^2x_m^2$  appear  in
 $f \times \ell$ for $j=1$ and  $j=2$.
 Hence, the sum of these terms in  $f \times \ell$ has
coefficient 2, which by our assumption, this sum would become zero.
Moreover, the terms $x_i^3x_m$ exist in the rows $j=2$ and $j=3$.
Hence, the coefficient of their sum is 2. Therefore, these terms
would be killed in $f \times \ell$. As well as,  each monomial
$x_ix_m^3$, in the first row of the above list, is equal to a term
$x_i^3x_k$, in the third row and vice versa. Hence, their sum would
be zero in $f \times \ell$. On the other hand,
 $x_i^4$ is zero   modulo $I$. Hence, it
remains the terms $x_ix_m^2x_t$ and $x_i^2x_mx_t$ in $f \times
\ell$. We  count the number of occurrence of these terms in $f
\times \ell$ via two ways.

{\it First method.} The number of distinct squarefree monomials of
degree 3 in terms of $\{x_1, \dots, x_r\}$ is equal to ${r \choose
3}$, and since the exponent of one of the variables in each terms is
equal to 2, the total number of these type of terms is equal to
\begin{align*}
3 {r \choose 3}= \frac{r(r-1)(r-2)}{2}.
\end{align*}
We label these terms as $a_{1}, a_{2}, \dots, a_{3 {r \choose 3}}$,
and set $M=a_{1}+ a_{2}+ \dots+ a_{3 {r \choose 3}}$.

{\it Second method.} Each of one of the monomials  $x_ix_m^2x_t$ and
$x_i^2x_mx_t$ is obtained by multiplying $x_t \in \{x_1, \dots,
x_r\}$, where $t \neq i, m$, by $x_i x_m^{2}$ and $x_i^2 x_m$ of
$f$. Hence, the total number of these summands is equal to $2(r-2){r
\choose 2}=r(r-1)(r-2)$.
We label these monomials as $b_{1}, b_{2}, \dots, b_{2(r-2){r \choose 2}}$.\\
By comparing the results of these two methods of counting, one can
observe that all monomials in the $f \times \ell$, are multiplied by
two, since $b_{1}+ b_{2}+ \dots+ b_{2(r-2){r \choose 2}}=2(a_{1}+
a_{2}+ \dots+ a_{3 {r \choose 3}})=2M$. Hence, by our assumption,
the sum is equal to 0, and the proof completes.
\end{proof}
 \begin{rem}\label{r2}
 Let $I$ be as the above ideal, where  $d\geq 5$ and $ 2 \le \alpha_i \le 4$ for all
$1 \le i \le r$. Then an argument similar to the one used in the
Remark \ref{r1} shows that $h_3 \leq h_4$.
\end{rem}

Due to its importance, we state a special case of the above theorem
as a corollary.
\begin{cor}
Let the ideal $I$ be as in Theorem \ref{M2}. If $d=r$, then the
algebra $R/I$ does not have the WLP.
\end{cor}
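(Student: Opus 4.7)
The plan is to obtain this as an immediate specialization of Theorem~\ref{M2}. Since $d = r$, the only squarefree monomial of degree $d$ in $r$ variables is $x_1 x_2 \cdots x_r$, so the ideal reduces to
\[
I \;=\; (x_1^{\alpha_1}, \ldots, x_r^{\alpha_r},\; x_1 x_2 \cdots x_r).
\]
The standing hypotheses $2 \leq \alpha_i \leq 4$ and $\mathrm{char}\,\mathbb{K} = 2$ are unchanged, and the requirement $d \geq 5$ of Theorem~\ref{M2} forces $r \geq 5$. Hence all hypotheses of that theorem are in force, and the conclusion that $R/I$ fails the WLP follows by direct quotation.

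What one still needs to verify is that no step in the proof of Theorem~\ref{M2} secretly depends on the squarefree part of $I$ being generated by more than one monomial. The witness $f = \sum_{1 \leq i < m \leq r} x_i^j x_m^{3-j}$ lies in $A_3$, and the product $f \cdot \ell$ lies in $A_4$; since $d = r \geq 5 > 4$, none of the degree-$3$ or degree-$4$ monomials appearing in the argument lies in $I$ on account of the squarefree generator $x_1 x_2 \cdots x_r$. Consequently the cancellation of the doubled terms $x_i^2 x_m^2$, $x_i^3 x_m$, and $x_i x_m^3$, as well as the two-way count pairing $\binom{r}{3}$ squarefree shapes of degree $3$ with the monomials $x_i x_m^2 x_t$ and $x_i^2 x_m x_t$, proceeds identically, and the surviving sum in $f \cdot \ell$ collapses because each term acquires coefficient $2 \equiv 0$ in $\mathbb{K}$.

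The main (and only) potential obstacle is therefore the slightly surprising possibility that specializing to $d = r$ might pull extra low-degree elements into $I$, shrinking $A_3$ enough to kill $f$ itself or changing the count. A brief inspection rules this out: the generators $x_i^{\alpha_i}$ with $\alpha_i \leq 4$ are the only ones producing nontrivial relations in degrees $3$ and $4$, and these are precisely the relations already accounted for in the proof of Theorem~\ref{M2}. Hence $\times \ell : A_3 \to A_4$ is not injective, and $R/I$ fails the WLP.
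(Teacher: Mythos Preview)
Your proposal is correct and takes the same approach as the paper: the corollary is simply the special case $d=r$ of Theorem~\ref{M2}, and the paper records it without any separate proof. Your additional verification that the argument of Theorem~\ref{M2} is unaffected by this specialization is sound but unnecessary, since $d=r$ lies within the hypotheses already covered by that theorem.
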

Contrary to the previous results which in characteristics 2 and 3,
we showed that the algebras that we considered do not pose the WLP,
in the next result, we determine all primes that the Artinian
algebra defined a specific ideal may not pose the WLP.
\begin{prop} \label{1}
Let
\begin{center}
$I^{'}=\left( x_1^2, \dots, x_r^2\right) +\left( \mathrm{all \
squarefree \ monomials \ of \ degree} \ d\right)$
\end{center}
be an ideal in $R$. Then $R/I^{'}$ is a level algebra and
$e=\mathrm{ Socle Degree }(I^{'})=d-1$. Moreover, $R/I^{'}$ doesn't
have the WLP in $\mathrm{char}\ \mathbb{K}= \mathit{p}$ whenever
$\mathit{p}$ is a prime number less than  $i+2$, where $1 \leq i
\leq \lceil r/2 \rceil$.
\end{prop}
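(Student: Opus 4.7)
The plan divides into a structural warm-up followed by the construction of a single explicit obstruction to injectivity of the multiplication map.

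First I would record that, modulo $I'$, a $\mathbb{K}$-basis of $A := R/I'$ consists exactly of the squarefree monomials of degree at most $d-1$: every non-squarefree monomial is divisible by some $x_i^2$, and every squarefree monomial of degree at least $d$ already lies in $I'$. Consequently $h_i = \binom{r}{i}$ for $0 \le i \le d-1$ and $h_i = 0$ otherwise, so the socle degree is exactly $d-1$. For the level property, fix a nonzero homogeneous $f \in A_i$ with $i < d-1$ and a squarefree monomial $x^B$ appearing in $f$ with coefficient $c_B \ne 0$; since $|B| = i < r$, there is some $j \notin B$, and the coefficient of $x^{B \cup \{j\}}$ in $x_j f$ is exactly $c_B$, because every other monomial $x^A$ in $f$ either already contains $x_j$ (and is killed by $x_j^2$) or has different support. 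Hence $\mathfrak{m}\cdot f \ne 0$, so the socle is concentrated in top degree.

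For the failure of WLP I would invoke \cite[Proposition 4.3]{LN} to reduce to the single linear form $\ell = x_1 + \cdots + x_r$. The natural candidate kernel element is the elementary squarefree symmetric sum $e_k := \sum_{|S|=k} x^S$. Direct expansion yields the key identity
\[
\ell \cdot e_k \,\equiv\, (k+1)\, e_{k+1} \pmod{I'},
\]
since $x_j \cdot x^S$ vanishes when $j \in S$ and equals $x^{S \cup \{j\}}$ otherwise, so each target monomial $x^T$ of degree $k+1$ is produced exactly $|T| = k+1$ times, once per $j \in T$. Given a prime $p$ with $1 \le i := p-1 \le \lceil r/2 \rceil$, the element $e_i$ is nonzero in $A_i$ (a sum of distinct basis monomials), while in characteristic $p$ we have $\ell \cdot e_i = p\cdot e_{i+1} = 0$. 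On this range $\binom{r}{i} \le \binom{r}{i+1}$ by unimodality of the binomial coefficients, so WLP would force $\times \ell : A_i \to A_{i+1}$ to be injective, contradicting $e_i \in \ker(\times \ell)$.

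The main technical subtlety lies at the boundary of the stated range. The injectivity-based argument above is clean only for $i \le (r-1)/2$, which falls one short of $\lceil r/2 \rceil$ when $r$ is even; in that case $h_i > h_{i+1}$ and the relevant condition is surjectivity, which is not ruled out by a single kernel element. Handling that boundary case would require either producing enough extra kernel elements to push $\dim\ker(\times\ell)$ above $h_i - h_{i+1}$, or dually finding a nonzero linear functional on $A_{i+1}$ annihilating the image. A minor bookkeeping point to track is that one needs $i+1 \le d-1$ for $A_{i+1}$ actually to contain $e_{i+1}$; this mild condition on $d$ should be read into the hypothesis.
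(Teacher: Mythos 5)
Your proposal is correct and takes essentially the same route as the paper: your $e_i$ is exactly the paper's element $f=\sum x_{j_1}\cdots x_{j_i}$, and your identity $\ell\cdot e_i\equiv (i+1)e_{i+1} \pmod{I'}$ is precisely the paper's two-way count of the squarefree degree-$(i+1)$ monomials, giving $f\times\ell=(i+1)M=0$ when $p\mid i+1$. The two points you add are worth noting: the boundary defect you flag at $i=\lceil r/2\rceil$ (where $h_i>h_{i+1}$, so a nontrivial kernel alone does not contradict maximal rank) is equally present but unacknowledged in the paper, whose proof only records $h_0\le\cdots\le h_{\lceil r/2\rceil}$ and hence does not cover the map out of degree $\lceil r/2\rceil$; and your verification of the level/socle-degree claim is simply omitted from the paper's proof.
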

\begin{proof}
By our assumption on $I$, each homogeneous component $A_i$ of $A$ is
generated by squarefree monomials of degree $i$. Hence $h_i={r
\choose i}$. Therefore,
\begin{center}
$h_0 \leq h_1 \leq \dots \leq h_{\lceil r/2 \rceil}$.
\end{center}

We show that the map $\times \ell: A_i \longrightarrow A_{i+1}$ can
not be injective for any $1 \leq i \leq \lceil r/2 \rceil$. Let
\begin{center}
$f =\sum x_{j_1}x_{j_2} \cdots x_{j_i}$,
\end{center}
where $1\leq j_i \leq r-1$.\\
According to the definition of $f$, the nonzero terms of $f \times
\ell$ are in the form:
\begin{center}
$x_{j_1}x_{j_2} \cdots x_{j_i} x_m$
\end{center}
where $1\leq m \leq r$ and $m$ is not in $\{j_1,\dots, j_i\}$.

Now we want to count the occurrence of these terms in $A_{i+1}$.\\
{\it First method.} We know that the number of squarefree monomials
of degree $i+1$ in $R$ 
is equal to ${r \choose i+1}$. We label these terms as $a_{1},
a_{2}, \dots, a_{{r \choose i+1}}$,
and set $M= a_{1}+ a_{2}+ \dots+ a_{{r \choose i+1}}$.\\
{\it Second method.} By definition of $f$, these terms of $f \times
\ell$ are products of all $x_{j_1}x_{j_2} \cdots x_{j_i} $ and the
different variables with $x_m$ in $\{x_1,\dots, x_r\}\setminus
\{x_{j_1}, \dots, x_{j_i} \}$. The number of possible such terms is
equal to
\begin{center}
$(r-i) {r \choose i}=\frac{(r-i)r(r-1)\cdots(r-i+1)}{i!}$.
\end{center}
We label these terms as $b_{1}, b_{2}, \dots, b_{(r-i) {r \choose i}}$. \\
By comparing the results of these two methods of counting, we
observe that the terms $x_{j_1}x_{j_2} \cdots x_{j_i}x_m $ of $f
\times \ell$, have coefficient equal to $i+1$ in $A_{i+1}$, because
$b_{1}+ b_{2}+ \dots+ b_{(r-i) {r \choose i}}=(i+1)( a_{1}+ a_{2}+
\dots+ a_{{r \choose i+1}})=(i+1)M$. Therefore, while $f \neq 0$, $f
\times \ell =0$ modulo $I$,  whenever $\mathit{p}$ is a prime
divisor of $ i+1$. Hence, $A$ does not have the WLP in these cases.
\end{proof}
The following corollary, not only shows that the WLP may fail for
only finitely many prime numbers, but also it determines these
primes exactly.
\begin{cor}\label{Cor38}
 Let $I^{'}$ be an ideal as  in
 Proposition \ref{1}.
 Then $R/I^{'}$ has the WLP
 whenever $\mathrm{char}\ \mathbb{K}=\mathit{p}$ is not a prime number less than $ i+2$, where
  $1 \leq i \leq \lceil r/2 \rceil$.
\end{cor}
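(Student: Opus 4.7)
The plan is to show that, under the hypothesis on $p$, the multiplication map $\times \ell : A_i \to A_{i+1}$ has maximal rank for every $0 \le i \le d-2$, which yields the WLP.

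I would first reinterpret the maps combinatorially. By the proof of Proposition \ref{1}, $A_i$ has a basis consisting of the squarefree monomials $x_S = \prod_{j \in S} x_j$ with $|S|=i$, and multiplication by $\ell$ sends $x_S$ to $\sum_{m \notin S} x_{S \cup \{m\}}$, since the terms $x_m \cdot x_S$ with $m \in S$ vanish modulo $(x_1^2, \dots, x_r^2)$. In the monomial basis, $\times \ell$ is therefore the $(0,1)$-inclusion matrix $U_i$ from $i$-subsets to $(i+1)$-subsets of $[r]$, and the WLP becomes the question of whether every $U_i$ has maximal $\mathbb{F}_p$-rank.

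I would then reduce all the rank questions to injectivity of $U_j$ with $j < \lceil r/2 \rceil$. The ambient complete intersection $S = \mathbb{K}[x_1,\dots,x_r]/(x_1^2,\dots,x_r^2)$ is Gorenstein with socle in degree $r$, and the perfect pairing $S_i \times S_{r-i} \to S_r$ given by multiplication identifies $U_i$ with the transpose of $U_{r-i-1}$. Since $R/I^{'}$ coincides with $S$ in every degree $\le d-1$, the surjectivity question for $U_i$ with $\lceil r/2 \rceil \le i \le d-2$ transports to an injectivity question for $U_j$ with $j = r-i-1 \le \lfloor r/2 \rfloor - 1 < \lceil r/2 \rceil$. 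So I only need to verify $\ker U_j = 0$ for $0 \le j \le \lceil r/2 \rceil - 1$.

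For such a $j$, the proof of Proposition \ref{1} shows that the $S_r$-invariant element $e_j = \sum_{|S|=j} x_S$ maps to $(j+1)\,e_{j+1}$, so $e_j \in \ker U_j$ iff $p \mid j+1$. The key additional input I would invoke is the classical theorem of Wilson on the Smith normal form of consecutive-level subset-inclusion matrices, according to which the elementary divisors of $U_j$ are exactly $1, 2, \dots, j+1$; hence $U_j$ is injective over $\mathbb{F}_p$ iff $p$ is coprime to every integer in $\{2, 3, \dots, j+1\}$. The hypothesis of the corollary is exactly the union of these conditions as $j$ ranges over $\{0, 1, \dots, \lceil r/2 \rceil - 1\}$, so every $U_j$ is injective, every $U_i$ is of maximal rank, and $R/I^{'}$ has the WLP.

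The main obstacle is this last invocation: while Wilson's Smith-normal-form description of subset-inclusion matrices is classical, it lies outside the paper. A self-contained substitute would require an $S_r$-isotypic analysis of $U_j$ showing that the only isotypic component on which $U_j$ can fail to be invertible is the trivial one (where it acts as the scalar $j+1$); this representation-theoretic step is the delicate part of the argument.
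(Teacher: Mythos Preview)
Your argument is correct and takes a genuinely different route from the paper. You identify $\times\ell$ with the Boolean inclusion matrix $U_j$, use Gorenstein duality in $S=\mathbb{K}[x_1,\dots,x_r]/(x_1^2,\dots,x_r^2)$ to transport every surjectivity question for $i\ge\lceil r/2\rceil$ to an injectivity question for some $U_j$ with $j<\lceil r/2\rceil$, and then invoke Wilson's diagonal form (elementary divisors $1,2,\dots,j+1$ for $U_j$) to determine exactly when $U_j$ is injective over $\mathbb{F}_p$. The paper instead argues directly: for the injectivity range it simply refers back to Proposition~\ref{1}, and for the surjectivity range it reduces via \cite[Proposition~2.1]{JM} to a single degree and exhibits $f=\sum_m x_{j_1}\cdots\widehat{x_{j_m}}\cdots x_{j_{i+1}}$ as an alleged $\ell$-preimage of the monomial $x_{j_1}\cdots x_{j_{i+1}}$. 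Your approach, though it relies on an external theorem that you honestly flag, is rigorous and in fact sharp; the paper's more elementary route is sketchier as written, since Proposition~\ref{1} only \emph{produces} kernel elements when $p\mid i+1$ and does not establish injectivity for the remaining primes, and the proposed preimage actually satisfies $\ell f=(i{+}1)\,x_{j_1}\cdots x_{j_{i+1}}+\sum_{k\notin\{j_1,\dots,j_{i+1}\}}\sum_m x_k x_{j_1}\cdots\widehat{x_{j_m}}\cdots x_{j_{i+1}}$, so it is not a preimage of a single monomial. Your acknowledged dependence on Wilson (or, alternatively, on the $S_r$-isotypic analysis you outline) is thus precisely the ingredient the paper's own argument is missing.
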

\begin{proof}
Two cases may arise. In the first case, which $i$ varies in the
range $1 \leq i \leq d-1 < \lceil r/2 \rceil $, we only need to
prove the injectivity of the map $\times \ell:A_i \longrightarrow
A_{i+1}$ for every $i$. But this follows from Proposition
\ref{1}, since all  maps $\times \ell:A_i \longrightarrow A_{i+1}$, 
are injective.

In the other case,  $i$ varies in the range  $\lceil r/2 \rceil \leq
i \leq d-1$, and it is enough to prove that all maps $\times \ell:
A_{ i} \longrightarrow A_{i+1}$,
 are surjective. By \cite [Proposition 2.1]{JM} it is enough to do it for $i=\lceil r/2 \rceil $. Let
$x_{j_1}x_{j_2} \cdots x_{j_{i+1}} $ be an element of $A_{i+1}$.
Then it is clear that, it is the image of $f=\sum_{m=1}^i
x_{j_1}\cdots x_{j_{m-1}}\widehat{x_{j_m}}x_{j_{m+1}}\cdots
x_{j_{i+1}}$ under the multiplication map by $\ell$.
\end{proof}

With a method similar to the proofs of Theorems \ref{M} and
\ref{M2}, we can prove the failure of the  WLP for another special
class of monomial ideals.

\begin{thm}({\bf Theorem C.})\label{C}
Let $J$ be the following ideal  of $R$ for which $r\ge 4$ and
$\alpha \geq 5$.
\begin{align*}
J=\left( x_1^{\alpha}, \dots, x_r^{\alpha},
x_1^{\alpha-2}x_2^{2},x_1^2x_2^{\alpha -2},
\dots,
x_{r-1}^{\alpha-2}{x_{r}}^{2},x_{r-1}^{2}{x_{r}}^{\alpha-2}
\right).
\end{align*}
 If $\mathrm{char\ }
 \mathbb{K}=2$, then
 $R/J$ does not have the WLP.
\end{thm}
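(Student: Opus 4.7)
The plan is to follow the strategy of Theorems~\ref{M} and~\ref{M2}: produce a nonzero element $f$ in some degree $d$ of $A=R/J$ such that $f\cdot \ell = 0$ in $A_{d+1}$, where $\ell=x_1+\cdots+x_r$. By \cite[Proposition 4.3]{LN}, $\ell$ is a universal candidate for a weak Lefschetz element, so failure of injectivity of $\times\ell$ at degree $d$ already defeats the WLP. A preliminary Hilbert-function count in the spirit of Remarks~\ref{r1} and~\ref{r2}, using the fact that all generators of $J$ live in degree $\alpha$, gives the inequality $h_{\alpha-1}\le h_{\alpha}$, so non-injectivity of $\times\ell\colon A_{\alpha-1}\to A_{\alpha}$ is the route.

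Guided by the two earlier proofs, $f$ will be built as a sum over pairs $i<m$ of a bipartite polynomial in $x_i,x_m$ of degree $\alpha-1$. The key local identity, valid in characteristic $2$, is the telescoping
$$
\Bigl(\sum_{j=0}^{\alpha-1} x_i^{\,j}x_m^{\,\alpha-1-j}\Bigr)(x_i+x_m)\;\equiv\; x_i^{\alpha}+x_m^{\alpha} \pmod 2,
$$
in which every intermediate two-variable monomial $x_i^{\,a}x_m^{\,\alpha-a}$ with $1\le a\le \alpha-1$ appears exactly twice and dies, leaving only the pure powers, which lie in $J$. Taking $f$ proportional to $\sum_{i<m}\sum_j x_i^{\,j}x_m^{\,\alpha-1-j}$ therefore makes the "within-pair" contribution of $f\cdot\ell$ vanish modulo $J$, and reduces the verification of $f\cdot \ell\equiv 0\pmod J$ to a statement about the cross terms $\sum_{i<m}\sum_{t\neq i,m} x_t\cdot\sum_j x_i^{\,j}x_m^{\,\alpha-1-j}$.

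The main obstacle is then to show that these cross contributions also vanish modulo $J$. Following the double-counting scheme of Theorems~\ref{M} and~\ref{M2}, each degree-$\alpha$ monomial $M$ appearing in the cross sum acquires a multiplicity equal to the number of variables in its support having exponent exactly one; because $\alpha\ge 5$, this count is at most two, and only the monomials $x_a^{p}x_b^{q}x_c$ with $p,q\ge 2$, $p+q=\alpha-1$, and $a,b,c$ distinct can survive the char-$2$ collapse with coefficient~$1$. These residues are killed by correcting $f$: one adjusts $f$ by three-variable or boundary ($j\in\{0,\alpha-1\}$) summands whose $\times\ell$-image is exactly the surviving cross residue, using the mixed generators $x_k^{\alpha-2}x_{k+1}^{2}$, $x_k^{2}x_{k+1}^{\alpha-2}$ of $J$ to absorb the consecutive-index residues and exploiting the hypothesis $r\ge 4$ to pair up residues across distinct supports. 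Identifying the exact correction is the technical heart of the argument; the bound $\alpha\ge 5$ provides enough room in the exponent profile for such a correction to exist.

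Finally, the corrected $f$ is visibly nonzero in $A_{\alpha-1}$: for any non-consecutive pair $(i,m)$ with $m-i\ge 2$ and any $j$ with $2\le j\le \alpha-3$, the monomial $x_i^{\,j}x_m^{\,\alpha-1-j}$ has both exponents in $[2,\alpha-3]$ on non-adjacent indices, so it is divisible by no generator of $J$ and survives in the quotient. Combined with $f\cdot \ell\equiv 0\pmod J$ and the Hilbert-function inequality above, this exhibits the required kernel element and forces $A=R/J$ to fail the WLP.
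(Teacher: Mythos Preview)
Your plan has a genuine gap, and it also rests on a misreading of the ideal $J$.

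First, the misreading: you treat the mixed generators as $x_k^{\alpha-2}x_{k+1}^{2}$ and $x_k^{2}x_{k+1}^{\alpha-2}$ for \emph{consecutive} indices only. In the paper (see Remark~\ref{special}, where these generators are counted as $2\binom{r}{2}$, and the proof itself, which uses that $x_i^{\alpha-2}x_m^{2}$ and $x_i^{2}x_m^{\alpha-2}$ lie in $J$ for arbitrary $i<m$) the mixed generators range over \emph{all} pairs. Several of your later moves (``absorb the consecutive-index residues'', ``pair up residues across distinct supports using $r\ge 4$'') are tailored to the wrong ideal.

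Second, and more seriously, your choice $f=\sum_{i<m}\sum_{j=0}^{\alpha-1}x_i^{\,j}x_m^{\,\alpha-1-j}$ leaves, after the char-$2$ collapse, the three-variable cross residues $x_a^{\,p}x_b^{\,q}x_c$ with $p,q\ge 2$ and $p+q=\alpha-1$, each with coefficient~$1$. Even with the correct (all-pairs) reading of $J$, none of these monomials lies in $J$: divisibility by a mixed generator would force $p\ge \alpha-2$ and $q\ge 2$ (or symmetrically), hence $p+q\ge\alpha$, a contradiction. You acknowledge that ``identifying the exact correction is the technical heart'' and then do not supply it; so the argument, as written, is incomplete precisely at its crux.

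The paper avoids this difficulty entirely by taking a much sparser $f$: only the three exponent patterns $j\in\{1,2,\alpha-1\}$ are used, i.e.\ $f=\sum_{i<m}\bigl(x_i^{\alpha-1}+x_i^{\alpha-2}x_m+x_ix_m^{\alpha-2}\bigr)$. With this choice the two-variable terms of $f\cdot\ell$ are either in $J$ (namely $x_i^{\alpha}$, $x_i^{\alpha-2}x_m^{2}$, $x_i^{2}x_m^{\alpha-2}$, using the all-pairs generators) or occur in matching pairs; and the only three-variable cross terms are of the shape $x_a^{\alpha-2}x_bx_c$, which a short double count shows each occur exactly twice, hence vanish in characteristic~$2$. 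No correction step is needed.
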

\begin{proof}
According to Remark \ref{special}, $h_{\alpha-1} \le h_{\alpha}$. We
show that the map $\times \ell: A_{\alpha-1} \longrightarrow
A_\alpha$ is not injective. Let
 $f\in A_{\alpha-1}$ be as follow
\begin{align*}
f=\sum_{1\le i<m\le r}x_i^{\alpha-j}x_m^{j-1} \ \ \mathrm{ where \ }
j=1,2, \alpha-1.
\end{align*}
Then terms of  $f \times \ell$, with respect to different values of
$j$, can be grouped together as follows.
$$
\begin{array}{llllll}
 \mathrm{ for  \ } j=1, & x_i^\alpha, & x_i^{\alpha-1}x_m,&
x_kx_i^{\alpha-1},&\mathrm { where \ }  1 \le  k<  i < m \le r;  \\
\mathrm{ for \  } j=2,& x_i^{\alpha-1}x_m,&   x_i^{\alpha-2}x_m^2,&  x_i^{\alpha-2}x_mx_t,&  \mathrm{where \ }  t\neq i, m, 1 \le  t \le r;   \\
 \mathrm{ for \ } j=\alpha-1,& x_ix_m^{\alpha-1},&
x_i^2x_m^{\alpha-2},& x_ix_m^{\alpha-2}x_t,&   \mathrm{where \ }
t\neq i, m, 1 \le  t \le r.
\end{array}
$$
 The monomials $x_i^\alpha$, $x_i^2x_m^{\alpha-2}$, and $x_i^{\alpha-2}x_m^2$ are
 zero modulo $ I$. As the above list shows,  the monomials $x_i^{\alpha-1}x_m$ appear
 for $j=1$ and $j=2$. Hence, the sum of these monomials in $f \times \ell$  has a coefficient equal to 2, which by
 our assumption, this sum would become zero. As well as, whenever $j=\alpha-1$, each monomial $x_ix_m^{\alpha-1}$, where
 $1 \le i < j\le r$,  is equal to a monomial $x_kx_i^{\alpha-1}$ with $1\le k<i<m\le r$ and $j=\alpha-1$, and vice versa.
  Hence, their sum in $f \times \ell$ is
a multiple of 2. Finally,  for $j=2$ and $j=\alpha-1$ the monomials
$x_i^{\alpha-2}x_mx_t$  and $x_ix_m^{\alpha-2}x_t$ remain in $f
\times \ell$. We  count the number of occurrence of these monomials
in $f \times \ell$ in two different ways.

 {\it First method.} The number of distinct monomials in three distinct
variables of $\{x_1, \dots, x_r\}$ is equal to ${r \choose 3}$, and
since the exponent of only one of its variables is equal to
$\alpha-2$, hence the total number of these elements are equal to
\begin{align*}
3 {r \choose 3}=\frac{r(r-1)(r-2)}{2}.
\end{align*}
We label these monomials as $a_{1}, a_{2}, \dots, a_{3 {r \choose
3}}$ and set $M=a_{1}+ a_{2}+ \dots+ a_{3 {r \choose 3}}$.

{\it Second method.} Each of one of the monomials
$x_i^{\alpha-2}x_mx_t$ and $x_ix_m^{\alpha-2}x_t$ are obtained by
multiplying $x_t\in \{x_1, \dots, x_r\}$, where $ t\neq i, m$, to
terms  $x_i^{\alpha-2}x_m$ and $x_ix_m^{\alpha-2}$  of $f$. Hence,
the number of these monomials  is equal to $2(r-2){r \choose
2}=r(r-1)(r-2)$. We label these terms as $b_{1}, b_{2}, \dots,
b_{2(r-2){r \choose 2}}$.

By comparing the results of these two methods of counting, one can
observe that all monomials in  $f \times \ell$, have a coefficient
equal to two, since $b_{1}+ b_{2}+ \dots+ b_{2(r-2){r \choose
2}}=2(a_{1}+ a_{2}+ \dots+ a_{3 {r \choose 3}})=2M$. Therefore, $f
\times \ell=0$, and the weak Lefschetz property does not hold for
this algebra.
\end{proof}

\begin{rem}\label{special}
Let $J$ be as the above ideal and $A=\bigoplus_{i=0}^e A_i$ with
$h_i=\dim A_i$. In the above argument, we need to have $h_{\alpha-1}
\le h_{\alpha}$. This inequality does not always hold. In fact, from
the structures of $A_{\alpha-1}$ and $A_{\alpha}$, we can deduce
\begin{align*}
h_{\alpha-1}= {\alpha+r-2 \choose r-1} \le h_{\alpha}={\alpha + r-1
\choose r-1} -r -2{r \choose 2},
\end{align*}
where for calculating $h_{\alpha}$, the total number of
$x_i^{\alpha}s$, which is equal to $r$, and the total number of
monomials in the forms $x_i^{\alpha-2}x_j^{2}$ and $x_i^2
x_j^{\alpha -2}$, which  is equal to $2 {r \choose 2}$, should be
subtracted. The above inequality implies $r^2 \le
\frac{(\alpha+r-2)!}{(r-2)!\alpha!} $. But this inequality holds if
$r\ge 4$ and $\alpha\ge 5$.
\end{rem}

According to the method of proofs of the above theorems, we are able
to determine many examples of  monomial Artinian algebras without
the WLP. On the other hand, it is possible to specify some classes
of them with the WLP.
\begin{exm}
Consider the ideal $$I=(x_1^4, x_2^4, x_3^3, x_4^3, x_5^2,
x_1x_2x_3x_4x_5),$$ in $\mathbb{K}[x_1, \dots, x_5]$. The {\bf
h}-vector of $R/I$ is $(1,5,14,28,43,52,49,35,18,6,1)$. Then by
Theorem \ref{M}, $ A_{3} \longrightarrow A_{4 }$ is not injective.
Therefore, it doesn't have the WLP whenever the characteristic of
$\mathbb{K}$ is two.

\end{exm}
\begin{exm}
Let
$I=(x_1^5,x_2^5,x_3^5,x_4^5,x_1^3x_2^2,x_1^2x_2^2,\dots,x_3^3x_4^2,x_3^3x_4^2)$
be an ideal in $\mathbb{K}[x_1,x_2,x_3,x_4]$. Its $\textbf{h}$
vector is $(1, 4, 10, 20, 35, 40, 26, 8, 1)$. Then by Theorem
\ref{C}, $ A_{4} \longrightarrow A_5$  is not injective, so it
doesn't have the WLP whenever the characteristic of $\mathbb{K}$ is
two.
\end{exm}

 \today

\begin{thebibliography}{99}
\bibitem{BK}
H. Brenner and A. Kaid, {\it A note on the weak Lefchetz property of
monomial complete intersections in positive characteristic},
Collect. Math. \textbf{62} (2011), 85--93.
\bibitem{D}
D. Cook II,  {\it The weak Lefschetz properties of monomial complete
intersections in positive characteristic},  J. Algebra \textbf{369}
(2012) 42--58.
\bibitem{CMNZ}
D. CookII, J. Migliore, U. Nagel and F. Zanello, {\it An algebraic
approach to finite projective planes}, J. Algebraic Combin.
\textbf{43} (2016), no. 3, 495--519.
\bibitem{DU}
D. Cook II and  U. Nagel {\it The weak Lefschetz property, monomial
ideals, and lozenges},  Illinois J. Math. \textbf{55} (1) (2011)
377--395.

\bibitem{TW}
T. Harima, J. Migliore, U. Nagel and J. Watanabe, {\it The Weak and
Strong Lefschetz Properties for artinian K-Algebras}, J. Algebra
\textbf{262} (2003), 99--126.

\bibitem{H}
T. Hibi, {\it What can be said about pure O-sequences?}, J. Combin.
Theory Ser. A \textbf{50} (1989), no. 319--322.

\bibitem{LN}
S. Lundqvist, and L. Nicklasson, {\it On the structure of monomial
complete intersections in positive characteristic}, Preprint
arXiv:1604.06820v2.

\bibitem{Maeno}
T. Maeno, {\it Finite geoemtry and the Lefschetz property fo
Artinian Gorenstein algebras,} Res. Rep. Fac. Sci. Technol. Meijo
Univ. No. 53 (2013), 1--6.

\bibitem{MI_MI}
J. Migliore and R. Mir\'o-Roig, {\it Ideals of general formsand the
ubiquity of the weak Lefschetz property}, J. Pure Appl. Algebra
\textbf{182} (1) (2003) 79--107.

\bibitem{JM}
J. Migliore, R. Mir\'o -Roig and U. Nagel, {\it Monomial ideals,
almost complete intersections and the weak Lefschetz property},
Trans. Amer. Math. Soc. \textbf{363} (1) (2011) 229--257.
\bibitem{JJ}
J. Migliore, R. Mir\'o-Roig and U. Nagel, A tour of the strong and
weak Lefschetz properties, J. Comm. Alg.,  \textbf{5}  (3) (2013),
329--358.

\bibitem{Miro}
R. M. Mir\'{o}-Roig, {\it Ordinary curves, webs and the ubiquity of
the weak Lefchetz Property}, Algebr. Represent. Theory
\textbf{(17)} (2014), no. 5, 1587--1596.
\bibitem{LZ}
J. Li, and F. Zanello, {\it Monomial complete intersections, the
weak Lefschetz property and plane partitions}, Discrete Mathematics,
\textbf{310} (2010), 3558--3570.
\bibitem{Stan1}
R. Stanley, {\it Weyl groups, the hard Lefschetz theorem and Sperner
propery}, SIAM J. Alg. Disc. Math. \textbf{1} (1980), 168--184.


\end{thebibliography}
\end{document}